\documentclass[10pt,leqno]{amsart}
\usepackage{graphicx}
\usepackage{indentfirst,csquotes}

\usepackage{amssymb,amsthm,amsmath}
\usepackage{xcolor,paralist,hyperref,fancyhdr,etoolbox}
\newtheorem{theorem}{Theorem}[]
\newtheorem{definition}[theorem]{Definition}

\newtheorem{lemma}[theorem]{Lemma}

\theoremstyle{remark}
\newtheorem{remark}[theorem]{Remark}

\hypersetup{ colorlinks=true, linkcolor=black, filecolor=black, urlcolor=black }
\def\proof{\noindent {\it Proof. $\, $}}
\def\endproof{\hfill $\Box$ \vskip 5 pt }

\begin{document}

\title{A Deterministic Fractal Set Derived from the Sequence of Prime Numbers}
\author{Zhengqiang Li}

\email{lqint@coc.edu.rs}

\date{February 28, 2026}

\maketitle

\let\thefootnote\relax
\footnotetext{MSC2020: 28A80, 11N05, 28A78.}
\footnotetext{Keywords: Fractal, Hausdorff dimension, box dimension, prime numbers, Moran set, deterministic construction.}

\begin{abstract}
We introduce a novel deterministic fractal set $P_F$ in the unit interval whose construction is driven by the sequence of prime numbers modulo 16. At each step of the recursive construction, two subintervals are retained based on the residues of consecutive primes, yielding a Cantor-like set with a uniform contraction ratio of $1/16$ and a branching number of 2. We prove that $P_F$ is a non-empty, compact, nowhere dense set of Lebesgue measure zero. Its Hausdorff dimension and box-counting dimension are both equal to $\frac{1}{4}$. The dimension is universal in the sense that it does not depend on the specific choice of the residue sequence, but only on the branching number and the contraction ratio. A generalization to arbitrary bases and branching numbers is also provided. This construction establishes a rigorous link between number theory and fractal geometry, offering a deterministic fractal whose structure is entirely encoded by the distribution of primes.
\end{abstract}

\bigskip

\section{Introduction}

Fractal sets often arise from random or iterative processes that exhibit self-similarity or statistical self-similarity. Classical examples include the Cantor set, the Sierpiński gasket, and Julia sets. In many constructions, the choice of which parts to retain at each step is governed by a fixed rule or a random process with prescribed probabilities.

In this paper we present a deterministic fractal set $P_F \subset [0,1]$ whose construction is dictated by the sequence of prime numbers. At the $n$-th stage, the prime $p_n$ modulo 16 determines which two subintervals (out of 16) are kept. Despite the deterministic and number-theoretic origin of the rule, the resulting set is a uniform Cantor-type set with a well-defined fractal dimension.

Our main result is that the Hausdorff dimension and the box-counting dimension of $P_F$ are both equal to $\frac{1}{4}$. This value follows directly from the branching number 2 and the contraction ratio $1/16$, independently of the particular sequence of residues. Thus, the construction provides a family of fractals with prescribed dimension, where the choice of the residue sequence only affects the geometry of the set, not its dimension.

The paper is organized as follows. Section 2 recalls the necessary background on Moran sets and dimension theory. Section 3 gives the precise definition of $P_F$ and establishes its basic topological properties. Section 4 contains the computation of the box dimension and the Hausdorff dimension. Section 5 discusses the connection with the distribution of primes and the statistical self-similarity of the set. Section 6 presents a natural generalization to arbitrary bases and branching numbers. Finally, Section 7 summarizes the results and indicates possible extensions.

\section{Preliminaries}

We denote by $\mathbb{N}$ the set of positive integers and by $\mathbb{P}$ the set of prime numbers. The $n$-th prime is written $p_n$ (so $p_1=2$, $p_2=3$, $p_3=5,\dots$). For a closed interval $I=[a,b]\subset\mathbb{R}$, $|I|=b-a$ is its length.

\begin{definition}[Generalized Moran set]
Let $\{n_k\}_{k=1}^\infty$ be a sequence of positive integers and $\{\Phi_k\}_{k=1}^\infty$ a sequence of finite families of contracting maps, where $\Phi_k=\{\varphi_{k,1},\dots,\varphi_{k,n_k}\}$. Starting from a compact set $J$, define recursively
\[
J_{\sigma_1\ldots\sigma_k}= \varphi_{k,\sigma_k}(J_{\sigma_1\ldots\sigma_{k-1}}), \qquad \sigma_i\in\{1,\dots,n_i\}.
\]
The set
\[
F=\bigcap_{k=1}^\infty\bigcup_{\sigma_1\ldots\sigma_k} J_{\sigma_1\ldots\sigma_k}
\]
is called a generalized Moran set. If there exists a constant $c>0$ such that for every $k$ and every admissible word $\sigma_1\ldots\sigma_k$,
\[
c^{-1}\rho^k \le |J_{\sigma_1\ldots\sigma_k}| \le c\rho^k,
\]
then $F$ is said to have a uniform contraction ratio $\rho$.
\end{definition}

\begin{definition}[Hausdorff dimension]
For a set $F\subset\mathbb{R}^d$, the Hausdorff dimension $\dim_H F$ is defined as the infimum of those $s\ge0$ for which the $s$-dimensional Hausdorff measure $H^s(F)$ is zero.
\end{definition}

\begin{definition}[Box-counting dimension]
For a bounded non-empty set $F\subset\mathbb{R}$, let $N_\varepsilon(F)$ be the minimal number of closed intervals of length $\varepsilon$ needed to cover $F$. The lower and upper box-counting dimensions are
\[
\underline{\dim}_B F = \varliminf_{\varepsilon\to0}\frac{\log N_\varepsilon(F)}{-\log\varepsilon},
\qquad
\overline{\dim}_B F = \varlimsup_{\varepsilon\to0}\frac{\log N_\varepsilon(F)}{-\log\varepsilon}.
\]
If the two limits coincide, their common value is the box-counting dimension $\dim_B F$.
\end{definition}

A basic tool for obtaining lower bounds on the Hausdorff dimension is the mass distribution principle (Frostman's lemma): if a probability measure $\mu$ supported on $F$ satisfies $\mu(B(x,r))\le C r^s$ for all $x\in\mathbb{R}^d$ and all $r>0$, then $\dim_H F\ge s$.

\section{Construction of the Fractal Prime Set $P_F$}

\begin{definition}[Prime residue sequence]
For each $n\in\mathbb{N}$ define
\[
a_n = p_n \bmod 16 \in \{0,1,\dots,15\},
\qquad
b_n = (a_n+8)\bmod 16.
\]
Because $p_n$ is odd for $n\ge2$, both $a_n$ and $b_n$ are odd residues; hence the construction actually uses only the eight odd residue classes $\{1,3,5,7,9,11,13,15\}$.
\end{definition}

\begin{definition}[Recursive construction]
We define a decreasing sequence of compact sets $F_n\subset[0,1]$ as follows.
\begin{enumerate}
\item Initial step: $F_0=[0,1]$.
\item Inductive step: Assume $F_{n-1}$ has been constructed as a union of $2^{\,n-1}$ disjoint closed intervals, each of length $16^{-(n-1)}$. Denote these intervals by $I_{n-1,k}$ ($k=1,\dots,2^{\,n-1}$). Each $I_{n-1,k}$ is divided into 16 congruent subintervals of length $16^{-n}$:
\[
I_{n-1,k}^{(j)} = \Bigl[\,x_k + j\cdot16^{-n},\; x_k+(j+1)\cdot16^{-n}\Bigr], \qquad j=0,\dots,15.
\]
We keep exactly two of them, namely those with indices $a_n$ and $b_n$:
\[
I_{n-1,k}\cap F_n = I_{n-1,k}^{(a_n)}\cup I_{n-1,k}^{(b_n)}.
\]
\item Define
\[
F_n = \bigcup_{k=1}^{2^{\,n-1}} \bigl(I_{n-1,k}\cap F_n\bigr).
\]
\end{enumerate}
Finally, the fractal prime set is
\[
P_F = \bigcap_{n=0}^{\infty} F_n.
\]
\end{definition}

\begin{lemma}[Basic properties of the construction]
For every $n\ge1$,
\begin{enumerate}
\item $F_n$ consists of $2^{\,n}$ disjoint closed intervals, each of length $16^{-n}$.
\item The two retained subintervals inside any parent interval are separated by at least six discarded subintervals; consequently their distance is at least $6\cdot16^{-n}$.
\end{enumerate}
\end{lemma}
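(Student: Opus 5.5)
The plan is to prove both items together by induction on $n$. The key arithmetic fact is that, for every $n$, the two retained indices satisfy $\{a_n,b_n\}=\{j,\,j+8\}$ for some $j\in\{0,\dots,7\}$. Indeed, $b_n\equiv a_n+8 \pmod{16}$. If $a_n\le 7$ then $b_n=a_n+8$, and if $a_n\ge 8$ then $b_n=a_n-8$. In either case $|a_n-b_n|=8$, so in particular $a_n\neq b_n$. This does not use the parity remark in the definition, so it also covers $n=1$, where $p_1=2$ gives the even residues $a_1=2$, $b_1=10$.

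For item (2) I would fix a parent interval $I_{n-1,k}$ with left endpoint $x_k$. Let $j=\min(a_n,b_n)$. The retained pieces are then $[x_k+j16^{-n},\,x_k+(j+1)16^{-n}]$ and $[x_k+(j+8)16^{-n},\,x_k+(j+9)16^{-n}]$. The subintervals with indices $j+1,\dots,j+7$ lie strictly between them; these are seven discarded subintervals, which is at least six. The gap between the two pieces is $(j+8)16^{-n}-(j+1)16^{-n}=7\cdot16^{-n}\ge 6\cdot16^{-n}$. This argument is uniform in $k$ and needs nothing about the global structure of $F_{n-1}$.

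For item (1) I would induct with base case $F_0=[0,1]$: one interval of length $16^0$. For the inductive step, assume $F_{n-1}$ is a union of $2^{n-1}$ pairwise disjoint closed intervals of length $16^{-(n-1)}$. Each such interval contributes exactly two closed subintervals of length $16^{-(n-1)}/16=16^{-n}$. These two are distinct and disjoint by the gap computed above. Subintervals coming from different parents lie inside disjoint parents, so they are disjoint as well. Hence $F_n$ is a union of $2\cdot 2^{n-1}=2^n$ pairwise disjoint closed intervals of length $16^{-n}$. This is exactly the hypothesis the definition needs for the next inductive step, so the construction is well defined at every stage.

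There is no real obstacle here. The only step needing care is the case check showing $|a_n-b_n|=8$ exactly, rather than merely $a_n\neq b_n$. In particular, the reduction mod $16$ cannot bring the two indices adjacent, or place them at the two ends $0$ and $15$ of the parent with the gap counted the wrong way. Working with $\min(a_n,b_n)\in\{0,\dots,7\}$ settles this directly.
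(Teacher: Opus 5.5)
Your proof is correct and follows essentially the same route as the paper: induction on the number of children for item (1), and the observation that $|a_n-b_n|=8$ exactly for item (2). It is tighter than the paper's argument in three ways. You avoid the parity assumption, which fails at $n=1$ where $a_1=2$, $b_1=10$. You count the seven subintervals $j+1,\dots,j+7$ directly, whereas the paper speaks of ``six odd positions'' between them, although only three odd indices lie strictly between $j$ and $j+8$. And you use the resulting gap of $7\cdot16^{-n}$ to justify the disjointness of the two children, which the paper leaves implicit.
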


\proof
The first statement follows by induction: $F_0$ has $2^0=1$ interval; if $F_{n-1}$ has $2^{\,n-1}$ intervals, each produces two children, so $F_n$ has $2\cdot2^{\,n-1}=2^{\,n}$ intervals. The length is clear from the division into 16 equal parts.

For the separation, note that $|a_n-b_n|\equiv8\pmod{16}$. Since both residues are odd, between them there are exactly six odd positions that are discarded, giving a gap of at least $6\cdot16^{-n}$.
\endproof

\begin{theorem}[Topological nature of $P_F$]
The set $P_F$ is non-empty, compact, nowhere dense, and has Lebesgue measure zero.
\end{theorem}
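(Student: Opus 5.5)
The plan is to derive all four properties directly from the preceding lemma on basic properties, using only that $F_n$ is a union of $2^n$ closed intervals of length $16^{-n}$ and that $F_n\subset F_{n-1}$ by construction (each retained child lies inside its parent).

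\emph{Compactness and non-emptiness.} Each $F_n$ is a finite union of closed intervals, hence compact and non-empty, and the sequence $(F_n)$ is decreasing. By Cantor's intersection theorem (the finite intersection property in the compact space $[0,1]$), $P_F=\bigcap_n F_n$ is non-empty. It is compact as an intersection of closed subsets of $[0,1]$. One could also exhibit a point explicitly. Choose a nested chain $I_0\supset I_1\supset\cdots$ with $I_n$ a component of $F_n$; the left endpoints form a Cauchy sequence, since consecutive endpoints differ by at most $16^{-n}$, and its limit lies in every $I_n$.

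\emph{Lebesgue measure zero.} Monotonicity gives $\lambda(P_F)\le\lambda(F_n)=2^n\cdot16^{-n}=8^{-n}$ for every $n$, and this bound tends to $0$.

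\emph{Nowhere density.} Since $P_F$ is closed, it suffices to show that $P_F$ has empty interior. Suppose $P_F$ contained an open interval $U$ of length $\ell>0$. Then $U\subset F_n$ for every $n$. Being connected, $U$ would lie in a single component of $F_n$, whose length is $16^{-n}$. This forces $\ell\le16^{-n}$ for all $n$, a contradiction. Alternatively, a set of measure zero cannot contain an interval.

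\emph{Main obstacle.} Once the lemma is available, no step is really hard. The only point needing care is the claim in the nowhere-density step that an interval inside $F_n$ sits within one component. This uses the disjointness of the $2^n$ closed intervals from the lemma. For that we need that distinct components of $F_n$ are genuinely separated, not merely touching at endpoints. This holds because children of the same parent are separated by at least $6\cdot16^{-n}$, and children of different parents lie inside disjoint parents. Even without it, the measure-zero argument already rules out interior points, so I would present that as the primary route.
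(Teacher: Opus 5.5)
Your proposal is correct and matches the paper's proof: you use Cantor's intersection theorem for non-emptiness and compactness, $\lambda(F_n)=8^{-n}\to0$ for measure zero, and the fact that an open interval inside every $F_n$ must lie in a single component of length $16^{-n}$ for nowhere density. You also make explicit that a closed set is nowhere dense iff it has empty interior, and you note that a null set contains no interval, which gives a slightly shorter route to nowhere density that does not depend on the separation of components.
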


\proof
The sets $F_n$ are non-empty compact and decreasing; by Cantor's intersection theorem, $P_F$ is non-empty and compact.

If $P_F$ contained an open interval $(a,b)$, then $(a,b)\subset F_n$ for all $n$. But $F_n$ is a union of intervals of length $16^{-n}$; once $16^{-n}<b-a$, no single interval of $F_n$ can contain $(a,b)$, a contradiction. Hence $P_F$ is nowhere dense.

The Lebesgue measure of $F_n$ is $2^{\,n}\cdot16^{-n}=8^{-n}\to0$; by monotonicity, $\mu(P_F)=0$.
\endproof

\begin{theorem}[Hexadecimal representation]
A number $x\in[0,1]$ belongs to $P_F$ if and only if it admits a hexadecimal expansion
\[
x = \sum_{n=1}^{\infty}\frac{c_n}{16^{\,n}}
\]
with $c_n\in\{a_n,b_n\}$ for every $n\ge1$.
\end{theorem}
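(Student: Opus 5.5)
The plan is to identify the intervals of $F_n$ explicitly by their hexadecimal addresses and then pass to the limit $n\to\infty$. First I will show by induction on $n$ that
\[
F_n=\bigcup_{(c_1,\dots,c_n)} \Bigl[\,\sum_{i=1}^n c_i16^{-i},\ \sum_{i=1}^n c_i16^{-i}+16^{-n}\Bigr],
\]
where the union runs over all words with $c_i\in\{a_i,b_i\}$ for $1\le i\le n$. For $n=0$ this is the empty word and gives $[0,1]$. For the inductive step, let $I_{n-1,k}$ be the interval with left endpoint $x_k=\sum_{i<n}c_i16^{-i}$. The construction keeps $I_{n-1,k}^{(j)}$ with left endpoint $x_k+j\,16^{-n}$ exactly for $j\in\{a_n,b_n\}$. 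This appends the digit $c_n=j$. By the preceding Lemma, the $2^n$ words give pairwise distinct (indeed disjoint) intervals.

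For the direction ($\Leftarrow$), suppose $x=\sum c_n16^{-n}$ with $c_n\in\{a_n,b_n\}$. For every $n$, write $s_n=\sum_{i\le n}c_i16^{-i}$. Then
\[
s_n\le x\le s_n+\sum_{i>n}15\cdot16^{-i}=s_n+16^{-n}.
\]
So $x$ lies in the $F_n$-interval addressed by $(c_1,\dots,c_n)$. Hence $x\in F_n$ for all $n$, and therefore $x\in P_F$.

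For the direction ($\Rightarrow$), take $x\in P_F$. For each $n$, $x$ lies in some interval of $F_n$. These intervals are disjoint by the preceding Lemma, so the interval, and hence its address word $w_n=(c_1,\dots,c_n)$, is unique. The $F_{n+1}$-interval containing $x$ is a child of the $F_n$-interval containing $x$. Again by disjointness, $w_{n+1}$ therefore extends $w_n$, which yields a single infinite sequence $(c_n)$ with $c_n\in\{a_n,b_n\}$. Since $0\le x-s_n\le16^{-n}\to0$, we get $x=\sum c_n16^{-n}$, which is the required expansion.

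The only delicate point is consistency of the addresses, that is, making sure the nested choice of intervals is well defined rather than ambiguous at shared endpoints. The separation in the preceding Lemma rules out shared endpoints, because sibling intervals are at distance at least $6\cdot16^{-n}$. This is why the statement only claims that $x$ \emph{admits} such an expansion, and no uniqueness issue arises. One further check: $a_1=2$ is even, so $b_1=10$. The Lemma's gap claim needs only $|a_n-b_n|=8$, which still gives a gap of $7\cdot16^{-n}$ at $n=1$, so the argument is unaffected.
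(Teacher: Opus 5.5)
Your proof is correct and follows the same route as the paper: identify the intervals of $F_n$ with the admissible digit words $(c_1,\dots,c_n)$, $c_i\in\{a_i,b_i\}$, and then intersect over $n$. It makes rigorous what the paper's brief ``by construction'' argument glosses over. You work with interval addresses rather than ``the'' first $n$ hexadecimal digits of $x$, which are ambiguous at points with two expansions. You also correctly note that $|a_n-b_n|=8$ gives the needed separation even at $n=1$, where $a_1=2$ is even.
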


\proof
By construction, $x\in F_n$ exactly when the first $n$ hexadecimal digits $c_1,\dots,c_n$ satisfy $c_i\in\{a_i,b_i\}$ for all $i\le n$. The intersection over all $n$ gives the stated characterization.
\endproof

(Points with two different hexadecimal expansions form a countable set, which does not affect the dimensional properties.)

\section{Computation of the Fractal Dimension}

\subsection{Box-counting dimension}

\begin{theorem}
The box-counting dimension of $P_F$ exists and equals $\frac14$.
\end{theorem}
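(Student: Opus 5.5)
The plan is to estimate $N_\varepsilon(P_F)$ from above and below along the geometric scales $\varepsilon = 16^{-n}$, and then pass to arbitrary $\varepsilon\to0$ by monotonicity of $N_\varepsilon$. The target estimate is
\[
c\,2^{n}\le N_{16^{-n}}(P_F)\le 2^{n},
\]
with $c>0$ independent of $n$. This gives $\log N_{16^{-n}}/\log 16^{n}\to \log 2/\log 16 = \frac14$.

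\emph{Upper bound.} By the Lemma on basic properties, $F_n$ is a union of $2^n$ closed intervals of length $16^{-n}$. Since $P_F\subset F_n$, these intervals form a cover of $P_F$ by intervals of length exactly $16^{-n}$. Hence $N_{16^{-n}}(P_F)\le 2^n$.

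\emph{Lower bound.} Each of the $2^n$ intervals $I$ of $F_n$ contains a point of $P_F$. Indeed, the construction restricted to $I$ is again a nested sequence of non-empty compact sets, so Cantor's intersection theorem applies; alternatively, choose digits $c_m\in\{a_m,b_m\}$ for $m>n$ in the Hexadecimal representation theorem. Pick one such point $x_I$ in each $I$. Distinct intervals of $F_n$ differ first at some stage $m\le n$, where they lie in different retained subintervals of a common parent. By part (2) of that Lemma, they are then separated by a gap of at least $6\cdot 16^{-m}\ge 6\cdot16^{-n}$. So the points $x_I$ are pairwise more than $16^{-n}$ apart. A closed interval of length $16^{-n}$ can contain at most one of them, which gives $N_{16^{-n}}(P_F)\ge 2^n$. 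Combined with the upper bound, $N_{16^{-n}}(P_F)=2^n$ exactly.

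\emph{General $\varepsilon$.} For $16^{-(n+1)}<\varepsilon\le 16^{-n}$, monotonicity of $N_\varepsilon$ in $\varepsilon$ gives $2^n\le N_\varepsilon\le 2^{n+1}$, while $n\log16\le -\log\varepsilon<(n+1)\log 16$. Dividing yields
\[
\frac{n\log2}{(n+1)\log16}\le\frac{\log N_\varepsilon}{-\log\varepsilon}\le\frac{(n+1)\log2}{n\log16}.
\]
Both sides tend to $\frac14$, so the lower and upper box dimensions agree and equal $\frac14$.

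The only step that needs care is the lower bound. It requires that points of $P_F$ in different stage-$n$ intervals cannot share one $\varepsilon$-box, and this is exactly where the separation estimate from the Lemma is used. Even without that separation, a cruder bound would suffice: each box of length $16^{-n}$ meets at most $2$ intervals of $F_n$, so $N\ge 2^{n-1}$, which still gives dimension $\frac14$. The dimension therefore does not depend on the particular prime residues, only on the branching number $2$ and the ratio $1/16$.
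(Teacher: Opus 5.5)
Your proof is correct and follows the paper's route: the $2^n$ intervals of $F_n$ give the upper bound, a matching lower bound gives $N_{16^{-n}}(P_F)= 2^n$, and monotonicity of $N_\varepsilon$ gives a sandwich for general $\varepsilon$. Your lower bound is more careful than the paper's, which appeals only to disjointness of the components of $F_n$; you supply the two facts that argument actually needs, namely that each component meets $P_F$ and that the separation in part (2) of the Lemma keeps points of different components more than $16^{-n}$ apart (your fallback bound $2^{n-1}$ also works).
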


\proof
Take $\varepsilon_n=16^{-n}$. The set $F_n$ is a cover of $P_F$ by $2^{\,n}$ intervals of length $\varepsilon_n$; therefore $N_{\varepsilon_n}(P_F)\le 2^{\,n}$. Conversely, because the intervals of $F_n$ are disjoint and each has length $\varepsilon_n$, any cover of $P_F$ by intervals of length $\varepsilon_n$ must contain at least one interval for each component of $F_n$; hence $N_{\varepsilon_n}(P_F)\ge 2^{\,n}$. Thus $N_{\varepsilon_n}(P_F)=2^{\,n}$. Consequently
\[
\frac{\log N_{\varepsilon_n}(P_F)}{-\log\varepsilon_n}
= \frac{n\log2}{n\log16}= \frac{\log2}{\log16}= \frac14.
\]
For an arbitrary $\varepsilon>0$, choose $n$ such that $16^{-n}\le\varepsilon<16^{-(n-1)}$. Then
\[
2^{\,n-1}=N_{16^{-(n-1)}}(P_F)\le N_\varepsilon(P_F)\le N_{16^{-n}}(P_F)=2^{\,n}.
\]
Taking logarithms and letting $\varepsilon\to0$ (so $n\to\infty$) gives
\[
\lim_{\varepsilon\to0}\frac{\log N_\varepsilon(P_F)}{-\log\varepsilon}= \frac{\log2}{\log16}= \frac14.
\]
Hence $\dim_B P_F=\frac14$.
\endproof

\subsection{Hausdorff dimension}

To obtain the Hausdorff dimension we first construct a natural probability measure on $P_F$.

\begin{definition}[Natural mass distribution]
On each $F_n$ define a probability measure $\mu_n$ by assigning mass $2^{-n}$ to every basic interval of $F_n$ (there are $2^{\,n}$ such intervals). Because the intervals are disjoint, $\mu_n$ is well defined.
\end{definition}

\begin{lemma}[Consistency]
The family $\{\mu_n\}$ is consistent: for any basic interval $I$ of $F_n$,
\[
\mu_n(I)=\mu_{n+1}(I\cap F_{n+1}).
\]
\end{lemma}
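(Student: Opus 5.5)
The plan is a direct count of basic intervals, using part (1) of the Lemma on basic properties of the construction together with the inductive step of the recursive construction.

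First I fix a basic interval $I$ of $F_n$, that is, one of the $2^{\,n}$ intervals $I_{n,k}$, so $\mu_n(I)=2^{-n}$ by definition of the natural mass distribution. I then identify $I\cap F_{n+1}$. By the inductive step, $I\cap F_{n+1}=I^{(a_{n+1})}\cup I^{(b_{n+1})}$, which is exactly two basic intervals of $F_{n+1}$. These two intervals are distinct because $a_{n+1}\neq b_{n+1}$ (they differ by $8$ modulo $16$). Hence they are disjoint up to at most an endpoint; in fact they are genuinely disjoint by the separation in part (2).

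Next I must check that no other interval of $F_{n+1}$ meets $I$. Every interval of $F_{n+1}$ lies inside its own parent interval of $F_n$, and the intervals of $F_n$ are pairwise disjoint by part (1). Therefore $I\cap F_{n+1}$ contains no pieces coming from other parents, apart from possible shared endpoints.

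This is the one delicate point, and I would dispose of it as follows. Shared endpoints form a set of $\mu_{n+1}$-measure zero, since $\mu_{n+1}$ is $2^{-(n+1)}$ times normalized Lebesgue measure on each basic interval of $F_{n+1}$ (interpreting ``assigning mass'' as uniform mass, which is the natural reading). Alternatively, part (1) already asserts genuine disjointness, so the issue does not arise.

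Finally, I compute
\[
\mu_{n+1}(I\cap F_{n+1})=\mu_{n+1}\bigl(I^{(a_{n+1})}\bigr)+\mu_{n+1}\bigl(I^{(b_{n+1})}\bigr)=2\cdot 2^{-(n+1)}=2^{-n}=\mu_n(I).
\]
This proves consistency. I expect no real obstacle; the only care needed is the disjointness and the endpoint bookkeeping just discussed.
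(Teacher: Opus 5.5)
Your proposal is correct and is the same argument as the paper's: $I\cap F_{n+1}$ is exactly the two retained children $I^{(a_{n+1})}$ and $I^{(b_{n+1})}$, each of $\mu_{n+1}$-mass $2^{-(n+1)}$, so the total is $2\cdot 2^{-(n+1)}=2^{-n}=\mu_n(I)$. Your endpoint bookkeeping is harmless but unnecessary: $|a_{n+1}-b_{n+1}|=8$ leaves seven discarded subintervals between the two children, and children of distinct parents lie in disjoint parents, so all basic intervals of $F_{n+1}$ are separated by positive gaps.
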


\proof
When passing from $F_n$ to $F_{n+1}$, the interval $I$ is split into 16 subintervals, two of which are kept. Each of those two receives mass $2^{-(n+1)}$, so the total mass on $I\cap F_{n+1}$ is $2\cdot2^{-(n+1)}=2^{-n}=\mu_n(I)$.
\endproof

By Kolmogorov's extension theorem there exists a unique Borel probability measure $\mu$ on $P_F$ such that $\mu(I\cap P_F)=2^{-n}$ for every basic interval $I$ of $F_n$.

\begin{lemma}[Measure decay]
There is a constant $C>0$ such that for every interval $J\subset[0,1]$,
\[
\mu(J)\le C\,|J|^{1/4}.
\]
\end{lemma}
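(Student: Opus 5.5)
The plan is to compare $J$ with the basic intervals of the construction at the scale matching $|J|$. Fix an interval $J\subset[0,1]$. If $|J|\ge 16^{-1}$, the bound is trivial: $\mu(J)\le 1\le 2\,|J|^{1/4}$ because $|J|^{1/4}\ge 1/2$. We may also assume $|J|>0$. In that case there is a unique $n\ge1$ with
\[
16^{-(n+1)}\le |J|<16^{-n}.
\]
Call this the \emph{scale} of $J$, and aim to show that $J$ meets only a bounded number of basic intervals of $F_n$.

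For the counting step, use Lemma (Basic properties), part (1). The basic intervals of $F_n$ are disjoint closed intervals of length $16^{-n}$. More precisely, each of them is one of the hexadecimal grid intervals $[m16^{-n},(m+1)16^{-n}]$, and distinct basic intervals are distinct grid cells. An interval of length less than $16^{-n}$ meets at most two grid cells of level $n$; it can meet a third only through shared endpoints, which carry no $\mu$-mass apart from a countable set that $\mu$ does not charge. A cruder bound of at most three cells also suffices. Since $\mu$ is supported on $P_F\subset F_n$, we get
\[
\mu(J)\le \sum_{I\subset F_n,\ I\cap J\neq\emptyset}\mu(I)\le 3\cdot 2^{-n},
\]
where the sum runs over basic intervals $I$ of $F_n$ and $\mu(I)=2^{-n}$ by the defining property of $\mu$ from the Consistency lemma and the Kolmogorov extension.

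Converting to a power of $|J|$: from $|J|\ge 16^{-(n+1)}$ we obtain $|J|^{1/4}\ge 2^{-(n+1)}$, so $2^{-n}\le 2|J|^{1/4}$. Hence $\mu(J)\le 6|J|^{1/4}$, and $C=6$ works in both cases.

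The only point needing care is the claim that $\mu$ of a basic interval equals $2^{-n}$ even when that interval shares endpoints with neighbouring grid cells. In other words, $\mu$ must give no extra mass at boundary points. If the single-point-mass issue becomes delicate, I would avoid it altogether by using the closed-cell count of three, which does not require atomlessness. The separation estimate in Lemma (Basic properties), part (2), is not needed, although it could sharpen the constant to $C=2$.
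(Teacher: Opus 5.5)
Your proof is correct and follows essentially the same route as the paper: fix the scale $n$ with $16^{-(n+1)}\le|J|<16^{-n}$, bound the number of basic intervals of $F_n$ meeting $J$ by an absolute constant, and use $\mu(I)=2^{-n}$. Your direct conversion $2^{-n}\le 2|J|^{1/4}$ is the sound way to finish. The paper's intermediate inequality $16^{n}\delta+3\le 4\cdot16^{n}\delta$ is false whenever $16^{n}\delta<1$, so the lemma in fact rests on exactly your estimate, $(16^n\delta+3)\,2^{-n}<4\cdot2^{-n}\le 8\,\delta^{1/4}$.
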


\proof
Let $\delta=|J|$ and choose $n$ with $16^{-(n+1)}\le\delta<16^{-n}$. The interval $J$ can intersect at most
\[
\Bigl\lfloor\frac{\delta}{16^{-n}}\Bigr\rfloor+2 \le 16^{\,n}\delta+3
\]
basic intervals of $F_n$. Each such interval carries mass $2^{-n}$. Hence
\[
\mu(J)\le (16^{\,n}\delta+3)\,2^{-n}.
\]
Because $\delta\ge16^{-(n+1)}$, we have $16^{\,n}\delta\ge\frac1{16}$, so $16^{\,n}\delta+3\le 4\cdot16^{\,n}\delta$. Therefore
\[
\mu(J)\le 4\cdot16^{\,n}\delta\cdot2^{-n}=4\cdot8^{\,n}\delta.
\]
Now $8^{\,n}=(16^{\,n})^{3/4}< \delta^{-3/4}$ (since $16^{\,n}<\delta^{-1}$). Consequently
\[
\mu(J)\le 4\,\delta^{1/4},
\]
which is the desired estimate with $C=4$.
\endproof

\begin{theorem}[Hausdorff dimension]
$\dim_H P_F = \frac14$.
\end{theorem}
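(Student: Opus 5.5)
The plan is to establish the two inequalities $\dim_H P_F\le\frac14$ and $\dim_H P_F\ge\frac14$ separately. Both follow quickly from results already proved.

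For the upper bound, I will use the standard inequality $\dim_H F\le\underline{\dim}_B F$. In fact I can argue directly. Fix $s>\frac14$. The $2^n$ basic intervals of $F_n$ cover $P_F$ and each has diameter $16^{-n}$. So the $16^{-n}$-approximation to the $s$-dimensional Hausdorff measure satisfies
\[
H^s_{16^{-n}}(P_F)\le 2^n\,16^{-ns}=\bigl(2\cdot16^{-s}\bigr)^n .
\]
Since $16^{s}>2$, the right-hand side tends to $0$ as $n\to\infty$. Hence $H^s(P_F)=0$, which gives $\dim_H P_F\le s$ for every $s>\frac14$. Alternatively, I could simply invoke the box-dimension theorem together with $\dim_H\le\underline{\dim}_B$.

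For the lower bound, I will apply the mass distribution principle from Section 2 to the natural measure $\mu$. By the consistency lemma and the extension argument, $\mu$ is a Borel probability measure supported on $P_F$. I then need the condition $\mu(B(x,r))\le Cr^{1/4}$ for all $x$ and all $r>0$. In $\mathbb{R}$ the ball $B(x,r)$ is an interval of length $2r$. For $2r\le1$ I can intersect it with $[0,1]$, which does not change its $\mu$-measure since $\mu$ lives in $[0,1]$. The measure decay lemma then gives
\[
\mu(B(x,r))\le 4(2r)^{1/4}\le 8\,r^{1/4}.
\]
For large $r$, the trivial bound $\mu\le1\le r^{1/4}$ when $r\ge1$ handles the remaining range. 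The principle then yields $\dim_H P_F\ge\frac14$.

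The only point needing care is the applicability of the measure decay lemma to all small intervals. Its proof chooses $n$ with $16^{-(n+1)}\le\delta<16^{-n}$, which requires $\delta<1$. So I must treat the cases $\delta\ge1$ (or $r$ bounded below) by the trivial bound, and I must make sure the constant stays uniform across these cases. Once this bookkeeping is done, combining the two bounds gives $\dim_H P_F=\frac14$.
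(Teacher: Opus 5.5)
Your proposal is correct and follows the paper's proof. The lower bound comes from the mass distribution principle applied to the natural measure via the measure-decay lemma, and the upper bound from covering $P_F$ by the $2^n$ basic intervals of $F_n$; the only cosmetic difference is that the paper takes $s=\frac14$ and gets $H^{1/4}(P_F)\le 1<\infty$, whereas you take $s>\frac14$ and get $H^s(P_F)=0$ directly.

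Your bookkeeping for balls versus intervals and large radii is fine (the leftover range $\frac12<r<1$ is trivial since $\mu\le 1\le 8r^{1/4}$ there). The lemma you cite does hold with $C=4$: an interval of length $\delta<16^{-n}$ meets at most two basic intervals of $F_n$, each of mass $2^{-n}\le 2\delta^{1/4}$. This is true even though the paper's intermediate step $16^n\delta+3\le 4\cdot 16^n\delta$ is not valid as written, because $16^n\delta<1$ in that range.
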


\proof
The mass distribution principle together with Lemma 4.4 gives $\dim_H P_F\ge\frac14$.

For the upper bound, fix $s=\frac14$. For any $\varepsilon>0$ take $n$ such that $16^{-n}<\varepsilon$. The collection of the $2^{\,n}$ basic intervals of $F_n$ is an $\varepsilon$-cover of $P_F$. Its $s$-dimensional sum is
\[
\sum_{i=1}^{2^{\,n}}|I_i|^s = 2^{\,n}\bigl(16^{-n}\bigr)^{1/4}=2^{\,n}\,2^{-n}=1.
\]
Hence $H^s_\varepsilon(P_F)\le1$ for every $\varepsilon$, so $H^s(P_F)\le1<\infty$ and therefore $\dim_H P_F\le s=\frac14$.
\endproof

Combining Theorems 4.1 and 4.5 we obtain the main result:

\begin{theorem}
For the fractal prime set $P_F$,
\[
\dim_H P_F = \dim_B P_F = \frac14.
\]
\end{theorem}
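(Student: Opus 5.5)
The statement combines two results already established in Section 4, so the plan is to quote them and check that they fit together. I would not re-derive any dimension estimates. Instead I would cite Theorem 4.1, the box-counting theorem, which gives $\dim_B P_F=\frac14$ together with the existence of the limit. I would also cite Theorem 4.5, which gives $\dim_H P_F=\frac14$. Putting these next to each other yields the displayed equality.

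For robustness, I would add a consistency check that does not depend on the details of either proof. The general inequality $\dim_H F\le\underline{\dim}_B F\le\overline{\dim}_B F$ holds for every bounded set. Consequently the upper bound $\dim_H P_F\le\frac14$ already follows from the box-counting computation, namely $N_{16^{-n}}(P_F)\le 2^n$. The only place where real input is needed is the lower bound $\dim_H P_F\ge\frac14$. That bound comes from the mass distribution principle applied to the natural measure $\mu$, using the decay estimate $\mu(J)\le 4|J|^{1/4}$ of Lemma 4.4. With both bounds in hand, $\frac14\le\dim_H P_F\le\underline{\dim}_B P_F\le\overline{\dim}_B P_F\le\frac14$, and every quantity in the chain collapses to $\frac14$.

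I expect the main obstacle to be the lower bound, specifically whether Lemma 4.4 is valid as stated. Its argument uses $\lfloor\delta 16^n\rfloor+2\le 16^n\delta+3\le 4\cdot16^n\delta$, and this requires $16^n\delta\ge1$, which need not hold when $16^{-(n+1)}\le\delta<16^{-n}$. If the constant fails, I would repair the estimate with a different counting step. I would choose $n$ so that $16^{-(n+1)}\le\delta<16^{-n}$ and observe that the separation in Lemma 3.3 forces $J$ to meet at most two basic intervals of $F_n$. This gives $\mu(J)\le 2\cdot 2^{-n}=2\cdot 16^{-n/4}\le 2\cdot 16^{1/4}\delta^{1/4}=4\delta^{1/4}$. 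That recovers the same constant $C=4$, so the mass distribution principle still applies and the chain of inequalities above closes.
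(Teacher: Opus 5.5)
Your proposal is correct and follows the paper, which proves this statement simply by combining Theorems 4.1 and 4.5. Your sandwich $\frac14\le\dim_H P_F\le\underline{\dim}_B P_F\le\overline{\dim}_B P_F\le\frac14$ is a harmless and slightly more economical variant, since it never uses the lower count $N_{16^{-n}}(P_F)\ge 2^n$. The paper justifies that count only by disjointness, although it tacitly needs the gaps.

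Your diagnosis of the measure decay lemma (Lemma 4.4) is also right, and the flaw is worse than "need not hold": with $16^{-(n+1)}\le\delta<16^{-n}$ one always has $16^n\delta<1$, so the step $16^n\delta+3\le 4\cdot16^n\delta$ fails for every $J$. Your repair $\mu(J)\le 2\cdot 2^{-n}\le 4\delta^{1/4}$ is valid, with one caveat on justification. The "at most two basic intervals" count follows from length alone, since an interval shorter than $16^{-n}$ cannot meet three disjoint intervals of length $16^{-n}$. It does not come from Lemma 3.3, which only separates siblings. With the nested gaps $7\cdot16^{-m}$ between siblings at every level $m$, the count is in fact at most one.
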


\section{Connection with Prime Distribution and Statistical Self-similarity}

The residues $a_n=p_n\bmod16$ are not arbitrary; they follow the well-known equidistribution of primes in residue classes coprime to the modulus.

\begin{theorem}[Dirichlet's theorem]
For any odd residue $j\in\{1,3,5,7,9,11,13,15\}$,
\[
\lim_{N\to\infty}\frac{\#\{n\le N : a_n=j\}}{N}= \frac18.
\]
\end{theorem}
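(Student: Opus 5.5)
This is the prime number theorem for arithmetic progressions with modulus $16$, and I would derive it rather than merely cite the qualitative Dirichlet theorem. The qualitative theorem only gives infinitely many primes in each class. The quantitative form gives $\pi(x;16,j)\sim \pi(x)/\varphi(16)$ for every $j$ coprime to $16$, and here $\varphi(16)=8$. The odd residues listed are exactly the reduced residues mod $16$.

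The plan has three steps.

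First, I translate the count over indices $n\le N$ into a count over primes up to $x=p_N$:
\[
\#\{n\le N: a_n=j\}=\pi(p_N;16,j)+O(1),
\]
where the $O(1)$ accounts for $p_1=2$, which lies in no odd class. Since $N=\pi(p_N)$, the ratio in the statement equals $\pi(x;16,j)/\pi(x)+o(1)$ with $x=p_N\to\infty$.

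Second, I prove $\pi(x;16,j)/\pi(x)\to 1/8$ via Dirichlet characters. Orthogonality gives
\[
\psi(x;16,j)=\frac18\sum_{\chi \bmod 16}\overline{\chi}(j)\,\psi(x,\chi).
\]
The principal character contributes $\psi(x)+O(\log x)\sim x$. It then suffices to show $\psi(x,\chi)=o(x)$ for each of the seven non-principal characters. Partial summation converts $\psi(x;16,j)\sim x/8$ into $\pi(x;16,j)\sim \mathrm{li}(x)/8$, and dividing by $\pi(x)\sim \mathrm{li}(x)$ gives the limit $1/8$.

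Third, I establish $\psi(x,\chi)=o(x)$ for non-principal $\chi$. Following the standard Wiener--Ikehara (or Newman) Tauberian route, this requires $L(s,\chi)$ to be analytic and non-vanishing on the closed half-plane $\operatorname{Re}s\ge 1$. Analyticity holds because the partial sums of $\chi$ are bounded. Non-vanishing on $\operatorname{Re}s=1$ with $s\neq1$ follows from the usual $3$--$4$--$1$ inequality applied to $\zeta(s)$, $L(s,\chi)$ and $L(s,\chi^2)$.

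The main obstacle is $L(1,\chi)\neq0$. For modulus $16$ every character is either real or complex, and I would handle the two cases separately.

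For complex $\chi$, I use that $\prod_\chi L(s,\chi)\ge 1$ for real $s>1$. The product is a Dirichlet series with nonnegative coefficients, so it cannot vanish to order two at $s=1$, where the principal factor has only a simple pole. A zero of a complex $\chi$ would come paired with a zero of $\overline{\chi}$, producing exactly that forbidden double zero.

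For the three real non-principal characters, I would attempt an explicit verification, since the modulus is small. These are the characters induced by $\chi_{-4}$, $\chi_8$ and $\chi_{-8}$, of conductors $4$, $8$ and $8$. Each $L(1,\chi)$ is an alternating-type series summing to a closed value:
\[
L(1,\chi_{-4})=\frac{\pi}{4},\qquad L(1,\chi_{8})=\frac{\log(1+\sqrt2)}{\sqrt2},\qquad L(1,\chi_{-8})=\frac{\pi}{2\sqrt2}.
\]
All three are positive. Passing from the primitive character to the induced character mod $16$ only removes an Euler factor at $2$, which is nonzero. As a fallback, the general Landau argument applies to the positive-coefficient series $\zeta(s)L(s,\chi)$.
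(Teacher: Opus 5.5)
The paper does not prove this statement; it quotes it as a classical fact. As you note, the label is a slight misnomer: Dirichlet's method gives infinitude and Dirichlet density $\frac18$, whereas the natural-density statement is the prime number theorem for progressions of de la Vall\'ee Poussin. The result serves only as motivation in Section 5 and enters no dimension computation, so a citation is all the paper needs.

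Your proposal instead supplies the standard analytic proof specialized to modulus $16$, and it is sound:
\begin{itemize}
\item The reduction is in fact exact: for odd $j$, $\#\{n\le N: a_n=j\}=\pi(p_N;16,j)$ and $N=\pi(p_N)$.
\item The $3$--$4$--$1$ inequality handles $\mathrm{Re}\,s=1$, $s\neq1$.
\item The pairing argument with $\prod_\chi L(s,\chi)\ge1$ handles complex $\chi$.
\item The real non-principal characters mod $16$ are exactly the three lifted from modulus $8$, with the values you list.
\end{itemize}
What your route buys is a self-contained and, for this small modulus, fully explicit treatment of $L(1,\chi)\neq0$. You do not even need the closed forms: grouping each series in blocks of one period shows positivity. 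Also, because the conductors are powers of $2$, the Euler factor at $2$ is identically $1$, so $L(s,\chi)=L(s,\chi^*)$.

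One step should be rephrased. Wiener--Ikehara needs nonnegative coefficients, and the final step of Newman's argument needs monotonicity. So neither applies directly to $\psi(x,\chi)$ for non-principal $\chi$. Apply the Tauberian theorem to $\psi(x;16,j)$ itself instead. Its Dirichlet series $-\frac18\sum_\chi\overline{\chi}(j)\frac{L'}{L}(s,\chi)$ has nonnegative coefficients. By your non-vanishing results together with $\zeta(1+it)\neq0$, it equals $\frac{1}{8(s-1)}$ plus a function continuous on $\mathrm{Re}\,s\ge1$. This yields $\psi(x;16,j)\sim x/8$ directly and makes the intermediate claim $\psi(x,\chi)=o(x)$ unnecessary.
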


Thus, while the particular sequence $\{a_n\}$ is deterministic and number-theoretic, it behaves statistically like a uniform random choice among the eight odd residues.

\begin{remark}[Universality of the dimension]
The dimension $\frac14$ depends only on the branching number 2 and the contraction ratio $1/16$. If we replace the sequence $\{a_n\}$ by any sequence $\{c_n\}\subset\{0,\dots,15\}$ and define $d_n=(c_n+8)\bmod16$, the resulting fractal set $F(\{c_n\})$ still satisfies
\[
\dim_H F(\{c_n\}) = \dim_B F(\{c_n\}) = \frac14.
\]
Hence the dimension is robust; the specific choice of residues influences the geometry of the set, but not its fractal dimension.
\end{remark}

\section{Generalization}

The construction readily extends to other bases and branching numbers.

\begin{definition}[Generalized construction]
    Fix integers $m\ge2$ and $1\le k<m$. Let $\{a_n\}$ be any sequence with $a_n\in\{0,\dots,m-1\}$. Define recursively a sequence of compact sets $F_n$$\subset$[0,1] as follows.Set $F_0 = [0,1]$. For $n \ge 1$, obtain $F_n$ from $F_{n-1}$ by dividing each component interval of  $F_{n-1}$ (which has length $m ^ {-(n-1)}$) into $m$ congruent subintervals of length $m^{-n}$, and retaining only the k subintervals whose indices (ordered from left to right as 0,1,...,m-1) are congruent modulo m to
    
    \[
    a_n,\; a_n+\big\lfloor\tfrac{m}{k}\big\rfloor,\; a_n+2\big\lfloor\tfrac{m}{k}\big\rfloor,\dots,\; a_n+(k-1)\big\lfloor\tfrac{m}{k}\big\rfloor.
    \]
    
    Finally, define
    
    \[
    F(m,k;\{a_n\}) = \bigcap_{n=0}^{\infty} F_n.
    \]
    
\end{definition}

\begin{theorem}
For any sequence $\{a_n\}$,
\[
\dim_H F(m,k;\{a_n\}) = \dim_B F(m,k;\{a_n\}) = \frac{\log k}{\log m}.
\]
\end{theorem}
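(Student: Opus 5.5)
The plan is to follow the scheme of Section 4. The upper bounds come from the natural covers, and a single measure-decay estimate gives the lower bounds for both the Hausdorff and the lower box dimension. Write $s=\log k/\log m$ and $F=F(m,k;\{a_n\})$. The case $k=1$ is trivial: $F$ is a single point and $s=0$. So assume $k\ge2$.

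First I will check that the construction is well posed. The $k$ indices $a_n+j\lfloor m/k\rfloor \pmod m$, $0\le j\le k-1$, are pairwise distinct, because $(k-1)\lfloor m/k\rfloor\le m-\lfloor m/k\rfloor<m$. By induction, $F_n$ is then a union of exactly $k^n$ closed intervals of length $m^{-n}$ with pairwise disjoint interiors, indexed by words $\sigma\in\{0,\dots,k-1\}^n$. Unlike Lemma 3.3, these intervals need \emph{not} be separated: when $\lfloor m/k\rfloor=1$, or after wrap-around mod $m$, two retained subintervals can be adjacent. For this reason I will avoid the disjointness-based counting of Theorem 4.1 and argue through a measure instead.

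\textbf{Upper bounds.} For $16$ replaced by $m$, the cover of $F$ by the $k^n$ basic intervals of $F_n$ gives $N_{m^{-n}}(F)\le k^n$ and $\sum|I_i|^s=k^n m^{-ns}=1$. As in Theorems 4.1 and 4.5, the first yields $\overline{\dim}_B F\le s$ by interpolating between scales $m^{-n}\le\varepsilon<m^{-(n-1)}$. The second yields $H^s(F)\le1$, hence $\dim_H F\le s$.

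\textbf{Measure.} To build the mass distribution cleanly despite shared endpoints, I define the coding map $\pi:\{0,\dots,k-1\}^{\mathbb N}\to[0,1]$. It sends a sequence to the unique point of the nested basic intervals $I_{\sigma_1}\supset I_{\sigma_1\sigma_2}\supset\cdots$. The map $\pi$ is continuous, since nested intervals of length $m^{-n}$ shrink, and its image is $F$. Let $\mu=\pi_*\beta$, where $\beta$ is the uniform Bernoulli product measure. Then $\mu$ is a Borel probability measure supported on $F$, and the set of codes $\pi^{-1}(I_\sigma)\supseteq[\sigma]$ has the property that $\mu$ charges only codes landing in the basic intervals.

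\textbf{Key estimate (main step).} Let $J$ be an interval with $m^{-(n+1)}\le|J|=\delta<m^{-n}$. Any $x\in J\cap F$ equals $\pi(\omega)$ with $\pi(\omega)\in I_{\omega_1\dots\omega_n}$, so $I_{\omega_1\dots\omega_n}\cap J\neq\emptyset$. Since the level-$n$ intervals have length $m^{-n}>\delta$ and disjoint interiors, $J$ meets at most $3$ of them. Therefore
\[
\mu(J)\le\beta\Bigl(\bigcup_{\sigma:\,I_\sigma\cap J\neq\emptyset}[\sigma]\Bigr)\le 3k^{-n}=3\,(m^{-n})^{s}\le 3\,(m\delta)^s=3k\,\delta^s .
\]
The mass distribution principle then gives $\dim_H F\ge s$.

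The same estimate also handles boxes. Any cover of $F$ by $N_\varepsilon(F)$ intervals of length $\varepsilon$ satisfies $1\le N_\varepsilon(F)\cdot 3k\,\varepsilon^s$. Hence $N_\varepsilon(F)\ge\varepsilon^{-s}/(3k)$, so $\underline{\dim}_B F\ge s$. Combining with the upper bounds, $\dim_H F\le\underline{\dim}_B F\le\overline{\dim}_B F\le s\le\dim_H F$, and the theorem follows.

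The main obstacle I expect is exactly the possible adjacency or overlap at endpoints of the retained subintervals. It invalidates the separation argument used for $P_F$, and it makes the measure definition via Kolmogorov consistency slightly delicate. Defining $\mu$ through the coding map and counting only the at most $3$ level-$n$ intervals meeting $J$ is how I plan to get around both issues.
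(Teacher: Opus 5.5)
Your proof is correct. For the Hausdorff dimension it follows the paper's scheme: natural covers give the upper bound, and the equal-mass measure with the mass distribution principle gives the lower bound. It departs from the paper in two places. First, you build $\mu$ as $\pi_*\beta$ through the coding map, instead of by Kolmogorov consistency on disjoint intervals. Second, and more substantively, you obtain the lower box bound $N_\varepsilon(F)\ge\varepsilon^{-s}/(3k)$ from the same Frostman-type estimate $\mu(J)\le 3k|J|^s$. The paper's ``identical'' proof instead reuses Theorem 4.1, which gets $N_{m^{-n}}(F)\ge k^n$ by charging a separate covering interval to each component of $F_n$, and so relies on the separation of Lemma 3.3.

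Your route is the more robust one. It uses only that basic intervals have disjoint interiors, so shared endpoints do no harm, and one estimate serves both lower bounds. Your single-scale count also gives the decay bound directly and avoids a slip in Lemma 4.4: the step $16^n\delta+3\le 4\cdot16^n\delta$ would need $16^n\delta\ge 1$, which fails there, though a larger constant repairs it. What the paper's counting buys, where it applies, is the exact value $N_{m^{-n}}(F)=k^n$ rather than a bound up to a constant.

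One precision on your adjacency remark: wrap-around alone does not produce it. If $\lfloor m/k\rfloor\ge 2$, every cyclic gap of the retained index set $\{a_n+j\lfloor m/k\rfloor \bmod m\}$ is at least $\lfloor m/k\rfloor\ge2$. So no two retained indices are consecutive, and $0$ and $m-1$ are never both retained. By induction the components of $F_n$ then stay separated. Shared endpoints can therefore occur only when $k>m/2$. Only in that range can the paper's appeal to disjointness break down, and that is where your measure-based argument does real work.
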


The proof is identical to that of Theorem 4.6, replacing the numbers 2 and 16 by $k$ and $m$, respectively.

In particular, taking $m=16$ and $k=2$ recovers our set $P_F$ with dimension $\frac{\log2}{\log16}=\frac14$.

\section{Conclusion}

We have introduced a deterministic fractal set $P_F$ whose construction is driven by the sequence of prime numbers modulo 16. The set is a uniform Cantor-type set with branching number 2 and contraction ratio $1/16$. Its Hausdorff dimension and box-counting dimension are both equal to $\frac14$, a value that is universal—it does not depend on the specific residue sequence, only on the branching and contraction parameters.

The construction provides a concrete link between number theory and fractal geometry. It also yields a family of fractals with prescribed dimension by varying the base $m$ and the branching number $k$.

Possible future directions include studying the finer geometric properties of $P_F$ (e.g., its exact Hausdorff measure, multifractal spectrum, or connectivity), investigating analogous constructions using other arithmetic sequences, or exploring potential physical interpretations where the dimension $\frac14$ might play a role (such as in models of quantum gravity or information theory).

\bigskip

\noindent\textit{Author's note:} This manuscript presents a purely mathematical construction. Any physical interpretations (e.g., connections with information theory or quantum gravity) are beyond the scope of the present paper and may be discussed elsewhere.

\end{document}